\definecolor{red}{rgb}{.8,0,0}
\definecolor{bblu}{rgb}{0,0,1}
\definecolor{purple}{rgb}{.8,0,1}
\newtheorem{theorem}{Theorem}[section]
\newtheorem{lemma}[theorem]{Lemma} 
\newtheorem{claim}[theorem]{Claim}
\newtheorem{question}[theorem]{Question}
\theoremstyle{definition}
\newtheorem{defn}{Definition}
\newcommand\comp[1]{{\mkern2mu\overline{\mkern-2mu#1}}}
\DeclareMathOperator\tr{tr}
\DeclareMathOperator\sument{sum}
\DeclareMathOperator\Aut{Aut}
	\newcommand\cH{{\mathcal H}}
	\newcommand{\lam}{\lambda}
	\newcommand{\f}[2]{\frac{#1}{#2}}
	\title{New Eigenvalue Bound for the  Fractional Chromatic Number}
	\author{Krystal Guo\thanks{Korteweg-de Vries Institute for Mathematics, University of Amsterdam, Amsterdam, the Netherlands. {\tt k.guo@uva.nl}.} \and Sam Spiro\thanks{Dept.\ of Mathematics, Rutgers University, Piscataway NJ, United States of America. {\tt sas703@scarletmail.rutgers.edu}. This material is based upon work supported by the National Science Foundation Mathematical Sciences Postdoctoral Research Fellowship under Grant No. DMS-2202730.}}
	\date{\today}
	\renewcommand{\SS}[1]{} 
	\newcommand\krystalsays[1]{}
	\newcommand\samsays[1]{}
	\newcommand{\norm}[1]{\|#1\|}
	\renewcommand{\vec}[1]{\textbf{#1}}
	\newtheoremstyle{LemmaNum}
	{\topsep}{\topsep}
	{\itshape}
	{}
	{\bfseries} 
	{.}
	{ }
	{\thmname{#1}\thmnote{ \bfseries #3}}
	\theoremstyle{LemmaNum}
	\newtheorem{lemn}{Lemma}
	\newtheorem{thmn}{Theorem}
	\newcommand{\ratio}[1]{\frac{\lam_{\max}(#1)}{|\lam_{\min}(#1)|}}
\begin{document}
		
		\maketitle
		
		\begin{abstract}
			Given a graph $G$, we let $s^+(G)$ denote the sum of the squares of the positive eigenvalues of the adjacency matrix of $G$, and we similarly define $s^-(G)$. We prove that
			\[\chi_f(G)\ge 1+\max\left\{\frac{s^+(G)}{s^-(G)},\frac{s^-(G)}{s^+(G)}\right\}\]
			and thus strengthen a result of Ando and Lin, who showed the same lower bound  for the chromatic number $\chi(G)$.  
			We in fact show a stronger result wherein we give a bound using the eigenvalues of $G$ and $H$ whenever $G$ has a homomorphism to an edge-transitive graph $H$. Our proof utilizes ideas motivated by association schemes.
			\vspace{1em}

			\noindent\textit{Keywords: graph eigenvalues, fractional chromatic number, spectral graph theory} 
			
			\noindent\textit{Mathematics Subject Classifications 2020: 05C50, 	05C72} 
		\end{abstract}
		
		\section{Introduction}
		Given a graph $G$, we say that $\lambda$ is an eigenvalue of $G$ if it is an eigenvalue for the adjacency matrix of $G$. There is a long history of using the eigenvalues of a graph $G$ to bound its chromatic number.  Early work on chromatic numbers and graph spectra includes a result of Wilf \cite{Wil1967} which gives an upper bound on $\chi(G)$ using the largest eigenvalue of $G$.  Recent variants of this upper bound have been proven for 
		digraphs \cite{Moh2010},  hypergraphs \cite{Coo2012}, and simplicial complexes \cite{Gol2017}. 
		A spectral lower bound for $\chi(G)$ was  established by Hoffman \cite{Hof1970} in terms of the largest and smallest eigenvalues of $G$ (see \eqref{eq:Hoffman} below), and it is related to an upper bound for the independence number in the setting of association schemes by Delsarte \cite[Section 3.3]{Del1973}. 
		Many extensions of these results have been established; for example, see \cite{Bil2006,Nik2007,vDaSot2016}. 
		
		Using the numbers of positive and negative eigenvalues, Cvektovi\'{c}~\cite{Cve1972} proved a lower bound for $\chi(G)$ (see also \cite{ElpWoc2017}).  In a similar spirit, Ando and Lin~\cite{AndLin2015} proved a lower bound for $\chi(G)$ in terms of the squares of the positive and negative eigenvalues of $G$, which will be of particular importance to this work.  To formally state their result, we let $\lambda_i$ be the $i$th largest eigenvalue of a graph $G$ and define
		\[
		s^+(G) = \sum_{i: \lambda_i >0 } \lambda_i^2, \qquad 		
		s^-(G) = \sum_{j: \lambda_j < 0 } \lambda_j^2.
		\]
		When the context is clear, we will omit $G$ from this notation. The quantities $s^+$ and $s^-$ were first considered by Wocjan and Elphick \cite{WocElp2012}.  They conjectured the following, which was proven by Ando and Lin \cite{AndLin2015}. 
		\begin{theorem}[\cite{AndLin2015}]\label{thm:AL}
			For any graph $G$, we have
			\[\chi(G)\ge 1+\max\left\{\frac{s^+(G)}{s^-(G)},\frac{s^-(G)}{s^+(G)}\right\}.\]
		\end{theorem}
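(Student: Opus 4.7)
Since $s^+(G) + s^-(G) = \tr(A^2) = 2|E(G)|$, the two inequalities in the theorem rearrange to $\chi(G)\, s^-(G) \ge 2|E(G)|$ and $\chi(G)\, s^+(G) \ge 2|E(G)|$; replacing $A$ by $-A$ swaps $s^+$ and $s^-$, so it suffices to prove the first. The plan is to exploit a proper $\chi$-coloring via a character-theoretic symmetrization. Set $k = \chi(G)$, fix a proper coloring $c : V(G) \to \ints/k\ints$, let $\omega = e^{2\pi i/k}$, and for each $\beta \in \ints/k\ints$ let $D_\beta$ be the unitary diagonal matrix with $(D_\beta)_{vv} = \omega^{\beta c(v)}$.

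The entries of $D_\beta A D_\beta^*$ differ from those of $A$ by the phase $\omega^{\beta(c(u) - c(v))}$, so averaging over $\beta$ projects $A$ onto its same-color-class entries, which are zero by properness of $c$. This yields the key identity
\[
\sum_{\beta=0}^{k-1} D_\beta A D_\beta^* = 0, \qquad \text{equivalently} \qquad A = -\sum_{\beta=1}^{k-1} D_\beta A D_\beta^*.
\]
Thus $A$ is written as a sum of $k-1$ Hermitian matrices, each unitarily equivalent to $-A$, and in particular each having $s^+$-value equal to $s^-(A)$. Splitting $A = A^+ - A^-$ into its positive and negative spectral parts and applying the same averaging produces the PSD-matrix equality $\sum_\beta D_\beta A^+ D_\beta^* = \sum_\beta D_\beta A^- D_\beta^*$, with common trace $k \tr(A^+) = k\tr(A^-)$. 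The strategy is then to combine this identity with a trace/majorization bound on sums of Hermitian matrices to extract $s^+(A) \le (k-1)\, s^-(A)$, which is exactly the desired inequality.

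The main obstacle I anticipate is converting the averaged PSD identity into the sharp numerical bound. A first attempt via Frobenius norms on both sides of $\sum_\beta D_\beta A^+ D_\beta^* = \sum_\beta D_\beta A^- D_\beta^*$ gives only that the restrictions of $A^+$ and $A^-$ to same-color-block entries have equal Frobenius norm, which is weaker than what is needed. The cleaner route, suggested by the paper's association-scheme framework and its generalization to edge-transitive targets, is to treat the coloring as a graph homomorphism $G \to K_k$ and derive the bound by a spectral comparison with $K_k$, for which the eigenvalues $k-1$ and $-1$ give $s^+(K_k)/s^-(K_k) = k-1$. Concretely, one would pull back a PSD certificate from $K_k$ along the homomorphism to produce a matrix witness on $V(G)$ whose trace bookkeeping forces $s^+(A) \le (k-1)\, s^-(A)$; producing and analyzing this certificate via the $D_\beta$-symmetrization above is where I expect the main technical work to lie.
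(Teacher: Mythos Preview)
Your $D_\beta$ symmetrization is correct and yields exactly that the color-class block-diagonal parts of $A^+$ and $A^-$ coincide. But the proposal is, by your own admission, incomplete: you flag that you do not know how to pass from this identity to $s^+\le(k-1)s^-$, and neither of the routes you sketch closes the gap. Taking Frobenius norms of $\sum_\beta D_\beta A^+ D_\beta^*=\sum_\beta D_\beta A^- D_\beta^*$ only recovers that the block-diagonal parts of $A^+$ and $A^-$ have equal Frobenius norm, which you already know entrywise; and ``pulling back a PSD certificate from $K_k$ along the homomorphism'' is a description of a hope, not an argument.

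The missing content is exactly what the paper supplies (in greater generality, for homomorphisms into any edge-transitive $H$; the case $H=K_k$ is the chromatic-number statement). Two ingredients are required. First, a block-norm inequality for PSD matrices: if $X$ is PSD with a $k$-block partition then $\|X\|^2\le k\sum_u\|X_{[u,u]}\|^2$. This is Lemma~\ref{lem:main} specialized to $H=K_k$, and its proof (Lemmas~\ref{lem:ZJohnson}--\ref{lem:Z}) is an averaging argument over $\Aut(H)$ in the same spirit as your $D_\beta$ averaging, applied not to $X$ itself but to the $|V(H)|\times|V(H)|$ matrix $Z$ with $Z_{u,v}=\|X_{[u,v]}\|^2$. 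Second, the Ando--Lin Cauchy--Schwarz step (Lemma~\ref{lem:confomal}): from $A^+A^-=0$ one has $\|A^++A^-\|^2=\|A^+-A^-\|^2$, which after expanding in blocks and applying Cauchy--Schwarz on the off-diagonal part gives $d^2\le(s^+-d)(s^--d)$ with $d:=\sum_u\|A^+_{[u,u]}\|^2=\sum_u\|A^-_{[u,u]}\|^2$. Rearranging to $(s^++s^-)d\le s^+s^-$ and feeding in $s^+\le kd$ from the first ingredient yields $s^+\le(k-1)s^-$. Your proposal contains the raw data needed for the first step but not its statement or proof, and does not contain the second step at all; the vague appeal to ``a trace/majorization bound on sums of Hermitian matrices'' does not substitute for either.
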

		Theorem~\ref{thm:AL} has inspired a large amount of follow up work, including extensions to quantum chromatic numbers \cite{ElpWoc2019},
		quantum graphs \cite{Gan2021}, and similar bounds for $p$-norms of certain classes of matrices \cite{MaoLiu2021}.
		
		In this paper, we show that Theorem \ref{thm:AL} also holds when $\chi(G)$ is replaced by the fractional chromatic number $\chi_f(G)$ (whose definition is recalled below). 
		We note that Anekstein, Elphick, and Wocjan \cite{wocjan2018more} asked whether Theorem \ref{thm:AL} can be strengthened by replacing the chromatic number $\chi(G)$ with the vector chromatic number $\chi_c(G)$, which satisfies the inequality $\chi(G)\ge \chi_c(G)$.  Our result can be viewed as the first progress towards this question;, it is well-known that these various chromatic numbers satisfy the inequalities $\chi(G) \geq \chi_f(G) \geq \chi_c(G)$, see for example \cite{Sch1979}.
		
		Recall that a \textit{homomorphism} between two graphs $G,H$ is a map $\phi:V(G)\to V(H)$ such that $\phi(u)\phi(v)\in E(H)$ whenever $uv\in E(G)$.  We define the \textit{Kneser graph} $K_{n;k}$ to be the graph whose vertices are the $k$-subsets of an $n$-element set, where two $k$-subsets are adjacent if and only if they are disjoint. The \textit{fractional chromatic number} of a graph $G$ is then defined by
		\[\chi_f(G)=\inf_{(n,k)} \frac{n}{k},\]
		where the infimum runs over all pairs $(n,k)$ such that there exist a homomorphism from $G$ to $K_{n;k}$. We note that one can equivalently define the fractional chromatic number as the optimum of the linear programming relaxation for an integer programming formulation of chromatic number.  For more background on the fractional chromatic number, we refer to  \cite{Sch2011}.   With $\chi_f(G)$ defined we can now state our strengthening of Theorem~\ref{thm:AL}.

		\begin{theorem}\label{thm:main}
			For any graph $G$, we have
			\[\chi_f(G)\ge 1+\max\left\{\frac{s^+(G)}{s^-(G)},\frac{s^-(G)}{s^+(G)}\right\}.\]
		\end{theorem}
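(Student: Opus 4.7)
My plan is to reduce to a spectral statement about homomorphisms to Kneser graphs. Since $\chi_f(G) = \inf\{n/k : G \to K_{n;k}\}$, it suffices to prove that any homomorphism $\phi \colon G \to K_{n;k}$ forces $n/k \ge 1 + s^+(G)/s^-(G)$; the symmetric bound $1 + s^-(G)/s^+(G)$ follows by the same argument with the roles of $A^+$ and $A^-$ interchanged.

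Given such $\phi$, I would pull back the Johnson-scheme structure of $K_{n;k}$ to $V(G)$. Let $X \in \{0,1\}^{n \times |V(G)|}$ be the incidence matrix with $X_{iv} = 1$ when $i \in \phi(v)$ and $0$ otherwise, and set $M = k^{-1} X^T X$. This $M$ is positive semidefinite, has unit diagonal, and satisfies $M_{uv} = 0$ whenever $uv \in E(G)$ (since adjacent vertices of $G$ are sent to disjoint $k$-subsets by $\phi$). The column-sum identity $\ones^T X \ones = k|V(G)|$ and Cauchy--Schwarz give $\ones^T M \ones \ge k|V(G)|^2/n$. Decomposing $A(G) = A^+ - A^-$ into its PSD positive and negative parts with $\tr((A^\pm)^2) = s^\pm$, the entrywise vanishing $A(G) \circ M = 0$ gives the balance identity $\tr(A^+ M) = \tr(A^- M)$, while orthogonality of positive and negative eigenspaces of $A(G)$ yields $\tr(A^+ A^-) = 0$.

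The heart of the proof is to translate these identities into the desired ratio bound. I would seek a positive semidefinite ``test matrix'' $N$ built as a nonnegative combination of $M$, $I$, $J$---or more generally of the pullbacks $C_j$ of the Johnson-scheme relation matrices, where $(C_j)_{uv} = 1$ iff $|\phi(u) \cap \phi(v)| = k - j$---such that $\tr(A^+ N) \ge s^+$ while $\tr(A^- N) \le (n/k - 1) s^-$. Combined with the balance identity and appropriate weights, this would force $n/k - 1 \ge s^+(G)/s^-(G)$. The authors' stronger result for an edge-transitive $H$ should emerge by taking $M$ to be the analogous pullback of a Bose--Mesner-algebra element of the coherent configuration on $V(H)$ induced by $\Aut(H)$, with $n/k - 1$ replaced by $\lambda_{\max}(H)/|\lambda_{\min}(H)|$; specializing to $H = K_{n;k}$ recovers Theorem~\ref{thm:main}, since the Hoffman ratio for the Kneser graph is precisely $n/k - 1$.

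The main obstacle I anticipate is identifying the correct $N$ and proving the two trace estimates. A naive Cauchy--Schwarz bound $\tr(A^\pm N) \le \sqrt{s^\pm \tr(N^2)}$ points in the wrong direction, so a more delicate argument exploiting the full association-scheme structure of the pullbacks $\{C_j\}_{j=0}^k$ will be required. I expect the right $N$ to be a carefully weighted sum of pullbacks of primitive idempotents of the Johnson scheme, chosen so that its eigenvalues on the positive eigenspace of $A(G)$ are suitably inflated relative to those on the negative eigenspace.
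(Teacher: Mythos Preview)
Your high-level framing is right---reduce to a homomorphism $\phi:G\to K_{n;k}$, split $A=A^+-A^-$, and aim for the Hoffman ratio $n/k-1$---and you correctly anticipate that the paper first proves the edge-transitive version (Theorem~\ref{thm:mainGen}) and then specializes. But the step you yourself flag as ``the main obstacle'' is a genuine gap, and the paper does not resolve it along the lines you sketch. No single PSD test matrix $N$ on $V(G)$, built from pullbacks of Johnson idempotents, is produced with $\tr(A^+N)\ge s^+$ and $\tr(A^-N)\le (n/k-1)s^-$; the eigenspaces of $A^\pm$ have no reason to be compatible with those of your $C_j$, so the hope of ``inflating eigenvalues on the positive eigenspace of $A(G)$'' via such an $N$ has no clear mechanism. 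Your balance identity $\tr(A^+M)=\tr(A^-M)$ is correct but does not combine with any visible choice of $N$ to produce the ratio.

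The paper's route is different in kind: it passes \emph{down} to $V(H)$ rather than pulling structure up to $V(G)$. With $V_u=\phi^{-1}(u)$ and $X=A^+$, one forms the matrix $Z$ on $V(H)$ with entries $Z_{u,v}=\norm{X_{[u,v]}}^2$ (block Frobenius norms); this $Z$ is nonnegative and, via the Schur product theorem, PSD. Averaging $Z$ over $\Aut(H)$ lands it in the span of the orbit matrices $A_0,\dots,A_n$ on pairs of $V(H)$, where a one-line Rayleigh-quotient argument against the $\lam_{\min}(H)$-eigenvector gives $\sument(Z)\le\bigl(1+\lam_{\max}(H)/|\lam_{\min}(H)|\bigr)\,\sument((J-A_n)\circ Z)$, i.e.\ $\norm{X}^2\le (1+\text{ratio})\sum_{\{u,v\}\notin E(H)}\norm{X_{[u,v]}}^2$. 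The passage from this ``$1+\text{ratio}$'' inequality to the desired $\norm{X}^2\le\text{ratio}\cdot\norm{Y}^2$ (with $Y=A^-$) is then a separate Cauchy--Schwarz manipulation in the style of Ando--Lin, using $XY=0$ together with $X_{[u,v]}=Y_{[u,v]}$ on non-edges of $H$. Both ingredients---the quadratic block-norm matrix $Z$ on $V(H)$ and the Ando--Lin reduction---are absent from your plan, and your $M$ and the pullbacks $C_j$ play no role in the paper's argument.
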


		Our proof of Theorem \ref{thm:main} uses ideas motivated by association schemes.  By generalizing our approach, we also prove the following result, which turns out to be somewhat stronger than Theorem~\ref{thm:main}.

		\begin{theorem}\label{thm:mainGen}
			If there exists a homomorphism from a graph $G$ to an edge-transitive graph $H$, then 
			\[\frac{\lambda_{\max}(H)}{|\lambda_{\min}(H)|}\ge \max\left\{\frac{s^+(G)}{s^-(G)},\frac{s^-(G)}{s^+(G)}\right\},\]
			where $\lambda_{\max}(H),\lam_{\min}(H)$ are the largest and smallest eigenvalue of the adjacency matrix of $H$, respectively.
		\end{theorem}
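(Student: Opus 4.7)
The plan is to use edge-transitivity of $H$ to pull back structural information about $A_H^\pm$ to PSD matrices on $V(G)$, and then combine the resulting trace identities with a PSD comparison to control the ratios. I focus on showing $\mu s^+(G) \leq d s^-(G)$, where $d := \lambda_{\max}(H)$ and $\mu := |\lambda_{\min}(H)|$; the inequality $\mu s^-(G) \leq d s^+(G)$ should follow by a symmetric argument swapping the roles of $A_H^+$ and $A_H^-$ everywhere.

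First, I write $A_H = A_H^+ - A_H^-$ with $A_H^\pm \succeq 0$ orthogonal and $\tr((A_H^\pm)^2) = s^\pm(H)$. Because $A_H^\pm$ are polynomials in $A_H$, they commute with $\Aut(H)$, and edge-transitivity forces $(A_H^\pm)_{v,v'}$ to take a single constant value on edges $vv' \in E(H)$. Evaluating $\tr(A_H A_H^\pm) = \pm s^\pm(H)$ as a sum over edges pins these constants down to $(A_H^+)_{v,v'} = s^+(H)/(2|E(H)|)$ and $(A_H^-)_{v,v'} = -s^-(H)/(2|E(H)|)$. Letting $P \in \{0,1\}^{V(G)\times V(H)}$ be the characteristic matrix of $\phi$ and $\tilde A^\pm := PA_H^\pm P^T \succeq 0$, the same constants persist on edges of $G$ (since $\phi$ maps $E(G)$ into $E(H)$), giving the key trace identities
\[\tr(A_G \tilde A^+) = \frac{|E(G)|}{|E(H)|}\, s^+(H), \qquad \tr(A_G \tilde A^-) = -\frac{|E(G)|}{|E(H)|}\, s^-(H).\]

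I would then introduce the auxiliary PSD matrix $\tilde N := \mu \tilde A^+ + d \tilde A^- = PM_H P^T$, where $M_H := \mu A_H^+ + d A_H^-$ on $V(H)$. Analyzing $M_H$ eigenspace by eigenspace, its eigenvalues are $\mu\mu_k$ for positive eigenvalues $\mu_k$ of $A_H$ and $d|\mu_k|$ for negative ones, all lying in $[0, d\mu]$; hence $M_H \preceq d\mu\, I_H$ and $\tilde N \preceq d\mu\, PP^T$. On the other hand, setting $N_G := \mu A_G^+ + d A_G^- \succeq 0$, the algebraic identity $\tr(A_G N_G) = \mu s^+(G) - d s^-(G)$ reduces the goal to $\tr(A_G N_G) \leq 0$. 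The closing step will combine the pullback trace $\tr(A_G \tilde N) = \frac{|E(G)|}{|E(H)|}(\mu s^+(H) - d s^-(H))$, the vanishing $\tr(A_G PP^T) = 0$ (since fibers of $\phi$ are independent sets in $G$), the PSD bound above, and Cauchy--Schwarz in the Frobenius inner product.

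The main obstacle is precisely this final step. The PSD operator inequality $\tilde N \preceq d\mu PP^T$ does not directly produce a trace inequality against the non-PSD $A_G$, and the most obvious Cauchy--Schwarz applications, such as $\tr(A_G^+ \tilde A^+)^2 \leq s^+(G)\tr((\tilde A^+)^2)$, yield lower rather than upper bounds on $s^+(G)/s^-(G)$. Resolving this will require leveraging the edge-constancy of $\tilde A^\pm$ on $E(G)$ together with the orthogonality $A_G^+ A_G^- = 0$, so that the edge-sum $\tr(A_G N_G) = 2 \sum_{uu' \in E(G)} (N_G)_{u,u'}$ can be bounded against the $H$-side quantities; I expect the successful argument will apply Cauchy--Schwarz to mixed comparisons between $A_G^+$ and $\tilde A^-$ (and vice versa), using orthogonality to cancel the cross terms.
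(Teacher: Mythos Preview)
Your preliminary computations are correct: edge-transitivity does force $(A_H^\pm)_{v,v'}$ to be constant on $E(H)$, the pullback trace identities for $\tr(A_G\tilde A^\pm)$ are as stated, and the operator bound $\tilde N\preceq d\mu\,PP^T$ holds. But the gap you flag in the final step is genuine and, within this setup, not closable. Everything you have computed pairs $A_G$ against matrices \emph{pulled back from $H$}; the target $\tr(A_G N_G)=\mu\, s^+(G)-d\,s^-(G)$ is purely intrinsic to $G$, and there is no relation between $N_G$ and $\tilde N$ to exploit. The Cauchy--Schwarz bounds you suggest, e.g.\ $|\tr(A_G^+\tilde A^-)|\le\bigl(s^+(G)\,\tr((\tilde A^-)^2)\bigr)^{1/2}$, bring in $\tr((\tilde A^\pm)^2)$, which depends on the fibre sizes of $\phi$ and is controlled by neither $s^\pm(G)$ nor $s^\pm(H)$. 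Moreover, the edge-constancy of $\tilde A^\pm$ on $E(G)$ only yields information about $\tr(A_G\cdot\,)$, never about $\tr(A_G^+\cdot\,)$ and $\tr(A_G^-\cdot\,)$ separately, so it cannot distinguish $s^+(G)$ from $s^-(G)$. A subtler issue: your quantity $\tr(A_G\tilde N)=\tfrac{|E(G)|}{|E(H)|}\bigl(\mu\, s^+(H)-d\,s^-(H)\bigr)$ is only known to be nonpositive once the theorem is proved for the identity map $H\to H$, so using it as an input would be circular.

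The paper's argument avoids all of this by working one level up. Writing $X=A_G^+$, $Y=A_G^-$ and partitioning $V(G)$ into fibres $V_u=\phi^{-1}(u)$, it forms the $V(H)\times V(H)$ matrix $Z$ with $Z_{u,v}=\norm{X_{[u,v]}}^2$; the Schur product theorem makes $Z$ non-negative and PSD. Averaging $Z$ over $\Aut(H)$ places it in the span of the orbit matrices $A_0,\ldots,A_n$ (with $A_n=A_H$), and testing this averaged matrix against the $\lambda_{\min}(H)$-eigenvector yields
\[
\sument(Z)\le\Bigl(1+\tfrac{d}{\mu}\Bigr)\sument\bigl((J-A_n)\circ Z\bigr),
\]
that is, $\norm{X}^2\le\bigl(1+\tfrac{d}{\mu}\bigr)\sum_{\{u,v\}\notin E(H)}\norm{X_{[u,v]}}^2$. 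This is then combined with a block-level Cauchy--Schwarz argument (using $XY=0$ and $X_{[u,v]}=Y_{[u,v]}$ whenever $\{u,v\}\notin E(H)$) to conclude $\norm{X}^2\le\tfrac{d}{\mu}\norm{Y}^2$. The decisive idea you are missing is this compression to a PSD matrix on $V(H)$: it is what allows the eigenvector of $H$ to act. Your trace identities on $V(G)$ never make contact with the spectral structure of $H$ beyond the single edge-constant, and that is not enough.
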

		
		As an aside, Theorem \ref{thm:mainGen} can be used to show the non-existence of homomorphisms between graphs. For example, it shows that the Petersen graph $P$ has no homomorphism into $C_7$, since $\nicefrac{s^+(P)}{s^-(P)} = \nicefrac{8}{7} = 1.\overline{142857}$ and $\nicefrac{\lambda_{\max}(C_7)}{|\lambda_{\min}(C_7)|} = 1.109916$, correct to the first six decimals.

		\subsection{Tightness and comparisons with other bounds}\label{sec:comparison}
		Here we briefly discuss examples showing that our bounds are tight, as well as how our bounds compare to other known lower bounds for $\chi_f(G)$
		
		Theorem~\ref{thm:AL} (and hence the stronger Theorems~\ref{thm:main} and \ref{thm:mainGen}) is tight for a number of examples.  The complete graph $K_n$ has $s^+(K_n) = (n-1)^2$ and  $s^-(K_n) = n-1$, so Theorem \ref{thm:AL} holds with equality. For a bipartite graph with $m$ edges, we have $s^+ = s^- = m$ and the bound also holds with equality here.  A less trivial example includes the Paley graph $P_9$ on $9$ vertices (see Figure~\ref{fig:examples}), which has chromatic number 3 and spectrum $\{4^{(1)}, 2^{(4)}, (-2)^{(4)} \}$ (where the multiplicities are shown in superscript).  We note that the clique and Paley graph examples can also be shown through Theorem~\ref{thm:mainGen} by considering the homomorphisms from these graphs to themselves.
		
		\begin{figure}[htb]
			\centering
			\includegraphics[scale=0.6]{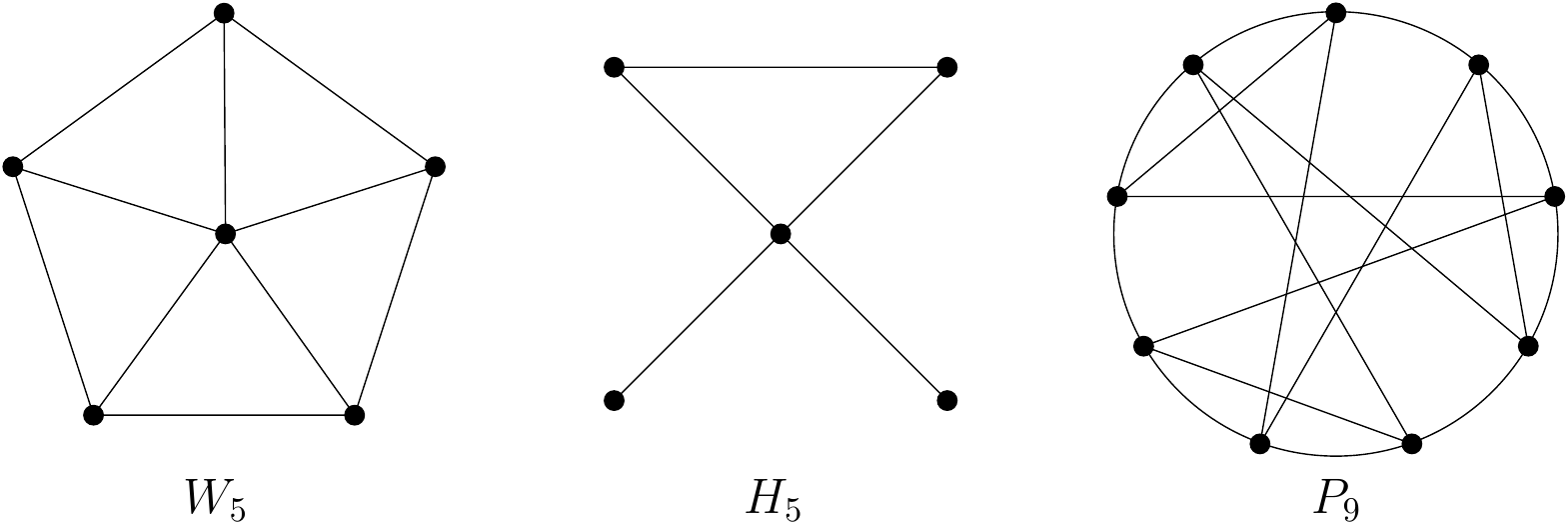}
			\caption{\textit{From left to right:} Graphs $W_5$ and $H_5$, which are witnesses to the comparability of Theorem \ref{thm:main} with the clique number and with the Hoffman bound, respectively, and $P_9$ the Paley graph of order $9$. }
			\label{fig:examples}
		\end{figure}
		
		In addition to these examples, there are several other ways that one can view Theorems~\ref{thm:main} and \ref{thm:mainGen} as being tight.  For example, Theorem~\ref{thm:mainGen} can not be strengthened to hold whenever $G$ has a homomorphism to an $H$ which is vertex-transitive.  A counterexample to such a statement follows by considering $G=K_3$ and $H=\overline{C_6}$ (which is vertex-transitive but not edge-transitive).  As $\overline{C_6}$ contains a triangle, there exists a homomorphism from $K_3$ to $\overline{C_6}$, but
		\[\frac{3}{2}=\frac{\lambda_{\max}(\overline{C_6})}{|\lambda_{\min}(\overline{C_6})|}<1+\max\left\{\frac{s^+(K_3)}{s^-(K_3)},\frac{s^-(K_3)}{s^+(K_3)}\right\}=3,\]
		showing that an extension of Theorem~\ref{thm:mainGen} to vertex-transitive graphs can not hold.
		
		We next consider how  Theorem~\ref{thm:main} compares with other well known lower bounds for $\chi_f(G)$.  Unlike the chromatic number, relatively few spectral bounds are known for $\chi_f(G)$.  One such bound is Hoffman's bound, which says
		\begin{equation}\chi_f(G)\ge 1+ \frac{\lambda_{\max}(G)}{|\lambda_{\min}(G)|}\label{eq:Hoffman}.\end{equation}
		
		In general the bounds of Theorem~\ref{thm:main} and \eqref{eq:Hoffman} are incomparable.  For example, \eqref{eq:Hoffman} gives $\chi_f(G)\ge \nicefrac{5}{2}$ when $G$ is the Petersen graph (which is tight), while Theorem~\ref{thm:main} only gives $\chi_f(G)\ge \nicefrac{11}{3}$. On the other hand, a small computation in SageMath \cite{sage} gives that amongst the $11,855$ graphs on $5, 6, 7, 8$ vertices which are connected and non-bipartite, there are $11,014$  graphs $G$ for which Theorem \ref{thm:main} gives a better bound on the fractional chromatic number than \eqref{eq:Hoffman}.  A concrete example is $H_5$, shown in the middle of Figure \ref{fig:examples}, which has
		\[
		1+\max\left\{\frac{s^+(H_5)}{s^-(H_5)},\frac{s^-(H_5)}{s^+(H_5)}\right\} \approx 2.331454 ,  \qquad 1+ \frac{\lam_{\max}(H_5)}{|\lam_{\min}(H_5)|} \approx 2.291859,
		\]
		rounded to the sixth digit. Thus Theorem~\ref{thm:main} and \eqref{eq:Hoffman} are incomparable. 
		
		Another well known bound is $\chi_f(G)\ge \omega(G)$, where $\omega(G)$ is the size of the largest clique of $G$.  Out bound is also incomparable with $\omega(G)$: the wheel $W_5$ (see left side of Figure \ref{fig:examples}) has 
		\[
		3 = \omega(W_5) > 1+\max\left\{\frac{s^+}{s^-},\frac{s^-}{s^+}\right\} \approx 2.725877
		\]
		and the $5$-cycle $C_5$ has
		\[
		2 = \omega(C_5) < 1+\max\left\{\frac{s^+}{s^-},\frac{s^-}{s^+}\right\}
		\approx 2.099106. 
		\]

		\subsection{Organization and notation}			
		
		We prove Theorem~\ref{thm:mainGen} in Section \ref{sec:proofofmain} assuming a technical lemma, and then show how this implies Theorem~\ref{thm:main}.  In Section \ref{sec:prooflemma} we prove this technical lemma, thereby completing the proof.  We conclude with some open problems in Section \ref{sec:conclusion}.

		\textbf{Notation}.  Given real matrices $X=(x_{i,j})_{i,j}$ and $Y=(y_{i,j})_{i,j}$, we let $\sument(X)=\sum_{i,j} x_{i,j}$. 
		We denote the Schur product with $\circ$ and recall that $X\circ Y=(x_{i,j}y_{i,j})_{i,j}$. 
		We use the following standard inner product for real $m\times n$ matrices $X$ and $Y$:
		\[
		\langle X, Y \rangle := \tr(XY^T) = \sument(X\circ Y)
		\]
		where the second equality is a standard property of matrix multiplication. We will sometimes write $\norm{X}^2$ for $\langle X, X\rangle$.
		Note that if $X$ is symmetric, then $\norm{X}^2$ is equal to the sum of the squares of the eigenvalues of $X$.  Throughout $J$ denotes the all 1's matrix.
		
		\section{Partitions into fibres: proof of main results}\label{sec:proofofmain}
		
		Before going into the details of the proof, we briefly overview the ideas of the argument.  Consider the spectral decomposition of the adjacency matrix $A=\sum \mu_i E_i$, where $E_i$ is the idempotent projector onto the $\mu_i$-eigenspace. We may write $A=X-Y$ where 
		\[ X=\sum_{i:\mu_i> 0} \mu_i E_i \, \text{ and }\, Y=-\sum_{i:\mu_i< 0} \mu_i E_i.\] 
		We observe that $X$ and $Y$ are both positive semidefinite matrices and that $\norm{X}^2=s^+,\norm{Y}^2=s^-$.  Thus proving Theorem~\ref{thm:mainGen} is equivalent to showing 
		\[ \norm{X}^2\le \f{\lam_{\max}(H)}{|\lam_{\min}(H)|}\norm{Y}^2 \text{\ \ \ and\ \ \ } \norm{Y}^2\le \f{\lam_{\max}(H)}{|\lam_{\min}(H)|}\norm{X}^2
		\]
		whenever there exists a homomorphism $\phi:V(G)\to V(H)$.  Given such a map $\phi$, we can naturally partition $V(G)$ into the sets $\phi^{-1}(u)$ with $u\in V(H)$. With this in mind, we make the following definition.
		
		\begin{defn}
			Given a graph $G$, we say that $\bigsqcup_{u\in V(H)} V_u$ is an \textit{$H$-partition} of $G$ if the sets $V_u\subseteq V(G)$ partition $V(G)$.  Given an $H$-partition, if $X$ is a matrix whose rows and columns are indexed by $V(G)$, we write $X_{[u,v]}$ to be the submatrix consisting of the rows indexed by $V_u$ and the columns indexed by $V_v$.
		\end{defn}
		
		We now state our main lemma.
		\begin{lemma}\label{lem:main}
			Let $G$ be a graph with an $H$-partition $\bigsqcup_{u\in V(H)} V_u$, and let $X$ be a PSD matrix with rows and columns indexed by $V(G)$.  If $H$ is vertex and edge-transitive, then
			\[\norm{X}^2\le \left(1+\frac{\lam_{\max}(H)}{|\lam_{\min}(H)|}\right)\sum_{(u,v):u,v\in V(H),\{u,v\}\notin E(H)} \norm{X_{[u,v]}}^2.\]
		\end{lemma}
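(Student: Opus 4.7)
The plan is to convert the desired Frobenius-norm inequality into a spectral inequality on $V(H)$ via a block-Gram representation, then apply an $\Aut(H)$-averaging together with a Bose--Mesner-style analysis on the $\lam_{\min}$-eigenspace of $A_H$. Concretely, factor $X = B^T B$ and for each $u \in V(H)$ let $B_u$ denote the submatrix of $B$ whose columns are indexed by $V_u$. Setting $P_u := B_u B_u^T$, each $P_u$ is PSD and a short trace computation gives $\norm{X_{[u,v]}}^2 = \tr(P_u P_v)$. Define $Q \in \re^{V(H) \times V(H)}$ by $Q_{uv} := \tr(P_u P_v)$; then $Q$ is PSD (as the Gram matrix of the $P_u$ in the Frobenius inner product) and has nonnegative entries. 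Writing $\lam := \lam_{\max}(H)$ and $\tau := |\lam_{\min}(H)|$, the lemma is equivalent to $\langle A_H, Q \rangle \le \tfrac{\lam}{\lam+\tau}\langle J, Q\rangle$.

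Next I reduce to $\Aut(H)$-invariant $Q$. Since $A_H$ and $J$ are $\Aut(H)$-invariant, averaging $Q$ over the diagonal $\Aut(H)$-action on $V(H)\times V(H)$ preserves both sides of the target, and the average $\bar Q$ remains PSD (as an average of permutation-conjugates of a PSD matrix) and entry-wise nonnegative. Vertex-transitivity makes $\bar Q_{uu}$ a common value $\alpha$, edge-transitivity makes $\bar Q_{uv}$ a common value $\beta$ on edges, and on the remaining non-edge $\Aut(H)$-orbits with $0/1$ indicator matrices $R_1,\ldots,R_m$ (symmetrized if necessary) of common row sums $d_1,\ldots,d_m$ we obtain
\[
\bar Q \ =\ \alpha I + \beta A_H + \sum_{i} \gamma_i R_i, \qquad \alpha,\beta,\gamma_i\ge 0.
\]
A short algebraic manipulation reduces the inequality $\langle A_H, \bar Q\rangle \le \tfrac{\lam}{\lam+\tau}\langle J, \bar Q\rangle$ to the cleaner statement $\tau\beta \le \alpha + \sum_i d_i\,\gamma_i$.

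For the final step, let $W$ denote the $-\tau$-eigenspace of $A_H$. Since $\bar Q \succeq 0$, the quadratic form $w \mapsto w^T \bar Q w$ is nonnegative on $W$; and because $A_H w = -\tau w$ for $w \in W$, this reads $(\alpha - \tau\beta)\norm{w}^2 + \sum_i \gamma_i\, w^T R_i w \ge 0$. Each $R_i$ is a symmetric $0/1$ matrix with row sums equal to $d_i$, so by Perron--Frobenius its largest eigenvalue in absolute value equals $d_i$, and hence $w^T R_i w \le d_i \norm{w}^2$. Combined with $\gamma_i \ge 0$ this gives $(\alpha - \tau\beta + \sum_i d_i\gamma_i)\norm{w}^2 \ge 0$ for every nonzero $w \in W$, which is precisely the reduced inequality.

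The main obstacle I anticipate is carefully tracking the reductions --- verifying that the $\Aut(H)$-average preserves both entry-wise nonnegativity and the PSD property of $Q$, and that edge-transitivity really does collapse the set of edge pairs into a single $\Aut(H)$-orbit so that the coefficient $\beta$ is well-defined. A subtle point worth flagging is that the commutant of $\Aut(H)$ on $\re^{V(H)}$ need not be a commutative algebra, so $\bar Q$ need not preserve $W$ as an $A_H$-invariant subspace; but the argument only uses nonnegativity of $w^T \bar Q w$ for $w \in W$, which is immediate from $\bar Q \succeq 0$, sidestepping this issue.
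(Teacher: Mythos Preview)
Your proposal is correct and follows essentially the same architecture as the paper: both define the matrix $Q$ (the paper calls it $Z$) on $V(H)$ with entries $\norm{X_{[u,v]}}^2$, verify it is PSD and entrywise nonnegative, average over $\Aut(H)$ to land in the span of the orbit matrices $I,A_H,R_1,\ldots,R_m$, and then test against a $\lam_{\min}(H)$-eigenvector combined with the Perron--Frobenius bound $w^T R_i w\le d_i\norm{w}^2$. The only cosmetic difference is that you certify positive semidefiniteness of $Q$ via the Gram representation $Q_{uv}=\langle P_u,P_v\rangle$, whereas the paper applies the Schur product theorem to $X\circ X$ and then block-sums; the remaining steps match line for line.
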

		For the moment we postpone the proof of Lemma~\ref{lem:main} and show how it implies Theorem~\ref{thm:main}.  We note that our approach for the rest of this section will closely follow that of Ando and Lin \cite{AndLin2015}.  
		\begin{lemma}\label{lem:confomal}
			Let $G$ be a graph with an $H$-partition $\bigsqcup_{u\in V(H)} V_u$, and let $X,Y$ be real PSD matrices with rows and columns indexed by $V(G)$.  If $H$ is vertex and edge-transitive, and if $XY=0$ and $X_{[u,v]}=Y_{[u,v]}$ whenever $\{u,v\}\notin E(H)$, then \[\norm{X}^2\le \frac{\lam_{\max}(H)}{|\lam_{\min}(H)|}\norm{Y}^2.\]
		\end{lemma}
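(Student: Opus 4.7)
The plan is to apply Lemma~\ref{lem:main} to $X$, then leverage the two hypotheses on $(X,Y)$ to upgrade the factor $1+r$ coming from Lemma~\ref{lem:main} down to $r$, where $r = \lambda_{\max}(H)/|\lambda_{\min}(H)|$. Set $a = \|X\|^2$, $b = \|Y\|^2$, and
\[c = \sum_{\{u,v\} \notin E(H)} \|X_{[u,v]}\|^2 = \sum_{\{u,v\} \notin E(H)} \|Y_{[u,v]}\|^2,\]
where the second equality uses the hypothesis that $X$ and $Y$ agree on non-edge blocks. Then $a-c$ and $b-c$ are the corresponding edge-block sums, both non-negative.

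Lemma~\ref{lem:main} applied to $X$ supplies the lower bound $c \ge a/(1+r)$. For a matching upper bound on $c$, the identity $XY = 0$ together with symmetry of $Y$ yields $\langle X, Y\rangle = \tr(XY) = 0$; decomposing this inner product blockwise and invoking $X_{[u,v]} = Y_{[u,v]}$ on non-edges leaves
\[0 \;=\; c \;+\; \sum_{\{u,v\} \in E(H)} \langle X_{[u,v]}, Y_{[u,v]} \rangle.\]
Applying the triangle inequality and then Cauchy--Schwarz (once to each block inner product, once to the sum over edges) bounds $c$ from above by $\sqrt{(a-c)(b-c)}$. Squaring this and simplifying collapses to the clean inequality $c(a+b) \le ab$, and substituting $c \ge a/(1+r)$ rearranges (after canceling $a$ in the nontrivial case $a>0$) to $a \le rb$, as desired; the degenerate case $a=0$ is immediate.

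I expect no serious obstacle in this argument once Lemma~\ref{lem:main} is granted: the moves are essentially forced by the available hypotheses. The only moderately subtle point is that the assumption $X_{[u,v]} = Y_{[u,v]}$ on non-edges is used twice, once to identify the two non-edge sums as a common quantity $c$ and once to eliminate the non-edge part of $\langle X, Y\rangle = 0$ so that Cauchy--Schwarz may be applied on the edge part alone. This double use is exactly what lets the edge and non-edge contributions be played off against each other to sharpen the factor from $1+r$ to $r$.
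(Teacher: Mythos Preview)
Your proof is correct and follows essentially the same route as the paper's. The only cosmetic difference is that you obtain the key identity $c = -\sum_{\{u,v\}\in E(H)} \langle X_{[u,v]}, Y_{[u,v]}\rangle$ directly from the blockwise expansion of $\langle X,Y\rangle = \tr(XY)=0$, whereas the paper reaches the same identity via $\|X+Y\|^2 = \|X-Y\|^2$; the subsequent Cauchy--Schwarz steps, the inequality $c(a+b)\le ab$, and the combination with Lemma~\ref{lem:main} are identical.
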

		\begin{proof}
			For ease of notation, we omit writing that our sums are over all ordered pairs $(u,v)$ with $u,v\in V(H)$. Let $X,Y$ be as in the hypothesis of the lemma.  Because $XY=0$, we have  $\norm{X+Y}^2=\norm{X-Y}^2$, which by hypothesis on $X,Y$ is equivalent to
			\[4\sum_{\{u,v\}\notin E(H)}\norm{X_{[u,v]}}^2+\sum_{\{u,v\}\in E(H)} \norm{X_{[u,v]}+Y_{[u,v]}}^2=\sum_{\{u,v\}\in E(H)}\norm{X_{[u,v]}-Y_{[u,v]}}^2.\]
			This is equivalent to 
			\begin{align*}4\sum_{\{u,v\}\notin E(H)}\norm{X_{[u,v]}}^2&=\sum_{\{u,v\}\in E(H)}\tr((X_{[u,v]}-Y_{[u,v]})(X_{[u,v]}-Y_{[u,v]})^T)-\tr((X_{[u,v]}+Y_{[u,v]})(X_{[u,v]}+Y_{[u,v]})^T)\\&=-4\sum_{\{u,v\}\in E(H)} \tr(X_{[u,v]}Y_{[u,v]}^T).\end{align*}
			
			Let $d=\sum_{\{u,v\}\notin E(H)}\norm{X_{[u,v]}}^2$.  The equality above together with two applications of Cauchy-Schwarz gives
			\begin{align*}d&=-\sum_{\{u,v\}\in E(H)} \tr(X_{[u,v]}Y_{[u,v]}^T)\\&\le \sum_{\{u,v\}\in E(H)} \norm{X_{[u,v]}}\cdot \norm{Y_{[u,v]}}\\&\le \left(\sum_{\{u,v\}\in E(H)} \norm{X_{[u,v]}}^2\right)^{1/2}\left(\sum_{\{u,v\}\in E(H)} \norm{Y_{[u,v]}}^2\right)^{1/2}\\&=(\norm{X}^2-d)^{1/2}(\|Y\|^2-d)^{1/2}.\end{align*}
			Thus
			\[d^2\le (\norm{X}^2-d)(\|Y\|^2-d),\]
			which is equivalent to
			\begin{equation}(\norm{X}^2+\|Y\|^2)d\le \norm{X}^2\|Y\|^2.\label{eq:normSum}\end{equation}
			By Lemma~\ref{lem:main}, we have
			\[\norm{X}^2\le \left(1+\frac{\lam_{\max}(H)}{|\lam_{\min}(H)|}\right)d.\]
			Combining this with \eqref{eq:normSum} implies
			\[\norm{X}^2+\norm{Y}^2\le \left(1+\frac{\lam_{\max}(H)}{|\lam_{\min}(H)|}\right)\norm{Y}^2,\]
			giving the desired result.
		\end{proof}
		We can now prove our main theorem, which we restate below. 
		
		\begin{thmn}[\ref*{thm:mainGen}]
			If there exists a homomorphism from a graph $G$ to an edge-transitive graph $H$, then 
			\[\frac{\lambda_{\max}(H)}{|\lambda_{\min}(H)|}\ge \max\left\{\frac{s^+(G)}{s^-(G)},\frac{s^-(G)}{s^+(G)}\right\},\]
			where $\lambda_{\max}(H),\lam_{\min}(H)$ are the largest and smallest eigenvalue of the adjacency matrix of $H$, respectively.
		\end{thmn}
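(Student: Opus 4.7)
The plan is to reduce Theorem \ref{thm:mainGen} to Lemma \ref{lem:confomal} using the spectral decomposition sketched at the beginning of Section \ref{sec:proofofmain}. First I would let $A$ be the adjacency matrix of $G$ with spectral decomposition $A = \sum_i \mu_i E_i$, and set
\[
X = \sum_{i:\, \mu_i > 0} \mu_i E_i, \qquad Y = -\sum_{i:\, \mu_i < 0} \mu_i E_i.
\]
Then $X$ and $Y$ are real PSD, $A = X - Y$, and $XY = 0$ by orthogonality of the spectral projectors. A short calculation using $E_i^2 = E_i$ and $E_i E_j = 0$ for $i \ne j$ gives $\norm{X}^2 = s^+(G)$ and $\norm{Y}^2 = s^-(G)$.

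Next I would use the homomorphism $\phi : V(G) \to V(H)$ to define the $H$-partition $V_u := \phi^{-1}(u)$. The crucial observation is that for any pair $(u,v)$ with $\{u,v\} \notin E(H)$ the submatrix $A_{[u,v]}$ vanishes: if $u \ne v$, an edge of $G$ with endpoints in $V_u \times V_v$ would force $uv \in E(H)$; if $u = v$, an edge inside $V_u$ would force a loop at $u$, which is impossible in a simple graph. Consequently $X_{[u,v]} = Y_{[u,v]}$ on every non-edge block, which is precisely the matching hypothesis of Lemma \ref{lem:confomal}.

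Assuming for the moment that $H$ is both vertex- and edge-transitive, Lemma \ref{lem:confomal} applied to the ordered pair $(X,Y)$ gives $s^+(G) \le \frac{\lam_{\max}(H)}{|\lam_{\min}(H)|}\, s^-(G)$, and applied to $(Y,X)$ (which satisfies the symmetric hypotheses, since $YX = (XY)^T = 0$) gives the reverse inequality; together they yield the theorem. The only remaining gap is that the theorem assumes only edge-transitivity. To close it I would invoke the classical dichotomy that an edge-transitive graph, after discarding isolated vertices, is either vertex-transitive or bipartite. In the bipartite case $\lam_{\max}(H) = |\lam_{\min}(H)|$ makes the right-hand side equal to $1$, and $\phi$ pulls back a bipartition of $H$ to one of $G$, forcing $s^+(G) = s^-(G)$, so the inequality is trivial. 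I do not expect a real obstacle: all the analytic content sits in Lemma \ref{lem:confomal}, and the present argument is essentially bookkeeping with the spectral decomposition plus this small case split.
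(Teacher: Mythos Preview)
Your proposal is correct and follows essentially the same route as the paper: the spectral splitting $A=X-Y$ into PSD parts, the $H$-partition $V_u=\phi^{-1}(u)$ forcing $X_{[u,v]}=Y_{[u,v]}$ on non-edge blocks, an appeal to Lemma~\ref{lem:confomal} for both orderings, and the bipartite/vertex-transitive dichotomy for edge-transitive graphs to dispose of the non-vertex-transitive case. Your treatment is in fact slightly more careful than the paper's in spelling out the $u=v$ block and the symmetric application to $(Y,X)$.
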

		\begin{proof}
			If $H$ is edge-transitive but not vertex-transitive, then $H$ is bipartite by \cite[Lemma 3.2.1]{GR}.  The existence of a homomorphism $\phi:V(G)\to V(H)$ implies that $G$ is also bipartite. Since the spectrum of bipartite graphs are symmetric about $0$ on the real line, we have $s^+(G)=s^-(G)$ and $\lam_{\max}(H)=-\lam_{\min}(H)$ and the result is trivial.  Thus from now on we may assume $H$ is both vertex and edge-transitive.
			
			For each $u\in V(H)$, define $V_u=\phi^{-1}(u)$. Note that $\sqcup_u V_u$ is an $H$-partition.  Consider the spectral decomposition of the adjacency matrix $A=\sum \mu_i E_i$, where $E_i$ is the idempotent eigenprojector on to the $\mu_i$-eigenspace. We write $A=X-Y$ where $X=\sum_{i:\mu_i> 0} \mu_i E_i$ and $Y=-\sum_{i:\mu_i< 0} \mu_i E_i$.  Note that $X,Y$ are both real PSD matrices.  Since $\phi$ is a homomorphism, there are no edges between $V_u$ and $V_v$ in $G$ if $\{u,v\}\notin E(H)$, so $A_{[u,v]}=0$ in this case.  Since $A=X-Y$, this implies $X_{[u,v]}=Y_{[u,v]}$ whenever $\{u,v\}\notin E(H)$.  By Lemma~\ref{lem:confomal} we conclude \[\|X\|^2\le \frac{\lam_{\max}(H)}{|\lam_{\min}(H)|}\|Y\|^2.\]  Since $\|X\|^2=s^+(G)$ and $\|Y\|^2=s^-(G)$, this gives $\frac{\lam_{\max}(H)}{|\lam_{\min}(H)|}\ge s^+(G)/s^-(G)$, and a completely analogous proof gives the lower bound of $s^-(G)/s^+(G)$.
		\end{proof}
		
		We now show how Theorem~\ref{thm:mainGen} implies the analogous lower bound for the fractional chromatic number.
		
		\begin{proof}[Proof of Theorem~\ref{thm:main}]
			We first claim that the Kneser graphs $K_{n;k}$  satisfy \[\frac{\lam_{\max}(K_{n;k})}{|\lam_{\min}(K_{n;k})|}=\frac{n}{k}-1.\]
			Indeed, it is well known that the eigenvalues of the Kneser graph $K_{n;k}$ can be written as
			\[(-1)^i \binom{n-k-i}{k-i}\]
			for all $0\le i\le k$.  In particular, $\lam_{\max}(K_{n;k})=\binom{n-k}{k}$ and $\lam_{\min}=-\binom{n-k-1}{k-1}=-\frac{k}{n-k} \binom{n-k}{k}$, giving the claim.
			
			It is not difficult to see that the Kneser graphs are edge-transitive.  Thus if we let $\Phi(G)$ (respectively $\mathcal{K}(G)$) denote the set of edge-transitive graphs (respectively kneser graphs) $H$ such that there exists a homomorphism $\phi:V(G)\to V(H)$, then Theorem~\ref{thm:mainGen} implies
			\[\max\left\{\frac{s^+(G)}{s^-(G)},\frac{s^-(G)}{s^+(G)}\right\}\le \inf_{H\in \Phi(G)} \ratio{H}\le \inf_{K_{n;k}\in \mathcal{K}(G)} \ratio{K_{n;k}}=\inf_{K_{n;k}\in \mathcal{K}(G)} \frac{n}{k}-1=\chi_f(G)-1,\]
			giving the result.
		\end{proof}
		
		\section{Proof of Lemma~\ref{lem:main}} \label{sec:prooflemma}
		
		We begin by establishing the key property we need about edge-transitive graphs. For this we let $\Aut(H)$ denote the automorphism group of $H$, i.e.\ the group of permutations on $V(H)$ which are graphs isomorphisms.  For ease of notation, we write $\{u,v\}$ as $uv$ and $\{\pi(u),\pi(v)\}$ as $\pi(uv)$.
		
		\begin{lemma}\label{lem:transitive}
			If $H$ is edge-transitive, then there exists a set of non-empty graphs $\cH=\{H_1,\ldots,H_n\}$ on $V(H)$ satisfying the following properties:
			\begin{enumerate}[(a)]
				\item $H\in \cH$;
				\item For every pair of distinct vertices $u,v\in V(H)$, there exists a unique $i$ with $uv\in V(H_i)$;
				\item For every pair of edges $uv,xy\in E(H_i)$, there exists $\pi\in \Aut(H)$ with $\pi(uv)=xy$; and
				\item For all $i$, the restriction of any $\pi\in \Aut(H)$ to $H_i$ is an automorphism for $H_i$.
			\end{enumerate}
		\end{lemma}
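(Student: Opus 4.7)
The plan is to let the $H_i$ be the \emph{orbital graphs} of $\Aut(H)$. Explicitly, the group $\Aut(H)$ acts on the set $\binom{V(H)}{2}$ of unordered pairs of vertices via $\pi \cdot \{u,v\} := \{\pi(u), \pi(v)\}$; let $\mathcal{O}_1, \ldots, \mathcal{O}_n$ denote the orbits of this action, and define $H_i$ to be the graph on vertex set $V(H)$ whose edge set is $\mathcal{O}_i$. Each $\mathcal{O}_i$ is nonempty, so each $H_i$ is a nonempty graph, and the collection $\cH = \{H_1, \ldots, H_n\}$ is our candidate.

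Properties (b), (c), and (d) are essentially formal consequences of the orbit construction. For (b) (reading the statement as the claim that each pair lies in $E(H_i)$ for a unique $i$), the orbits partition $\binom{V(H)}{2}$, so every pair of distinct vertices $\{u,v\}$ lies in exactly one edge set $E(H_i)$. For (c), two edges $uv, xy \in E(H_i) = \mathcal{O}_i$ lie in the same $\Aut(H)$-orbit by construction, so there is some $\pi \in \Aut(H)$ with $\pi(uv) = xy$. For (d), if $\pi \in \Aut(H)$ and $uv \in \mathcal{O}_i$, then $\pi(uv) \in \mathcal{O}_i$ by the definition of an orbit, so $\pi$ sends $E(H_i)$ into itself; since the same holds for $\pi^{-1}$, the restriction of $\pi$ is a bijection on $E(H_i)$ and hence an automorphism of $H_i$.

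The only property that actually uses the hypothesis on $H$ is (a), and this is where edge-transitivity enters. Automorphisms of $H$ send edges to edges, so $E(H)$ is a union of $\Aut(H)$-orbits on $\binom{V(H)}{2}$. Edge-transitivity asserts that $\Aut(H)$ acts transitively on $E(H)$, so this union consists of a single orbit, meaning $E(H) = \mathcal{O}_i$ for some $i$, i.e.\ $H = H_i \in \cH$. I do not foresee any real obstacle; the entire lemma is the standard observation that the orbital graphs of $\Aut(H)$ decompose the complete graph on $V(H)$ into $\Aut(H)$-invariant pieces, with edge-transitivity guaranteeing that $H$ itself appears as one of these pieces.
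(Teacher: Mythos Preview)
Your proof is correct and takes essentially the same approach as the paper: both construct the $H_i$ as the orbital graphs of the $\Aut(H)$-action on unordered pairs of vertices, with edge-transitivity ensuring that $E(H)$ is itself a single orbit. If anything, you spell out the verification of (b)--(d) more explicitly than the paper, which simply remarks that these follow from the orbit partition.
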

		\begin{proof}
			
			We consider the action of $\Aut(H)$ on the set of unordered pair of vertices of $H$ and we take $O_1,\ldots O_n$ to be the orbits of this action. Here, the orbit containing a pair $uv$ is  $O =\{xy:\exists \pi\in \Aut(H),\ \pi(uv)=xy\}$.  Define $H_i$ to be the graph on $V(H)$ with edge set $O_i$ and let $\cH:=\{H_1,\ldots,H_m\}$.

			Since $H$ is edge-transitive, $E(H)$ must be an orbit, so we have $H\in \cH$.  Since the orbits partition the pairs of $V(H)$, we have that the sets $\{E(H_i)\}_{i=1}^n$ partition the pairs. 	Since orbit partitions give rise to systems of blocks of imprimitivity, the other properties follow. 
		\end{proof}
		
		As an aside, it is known that if $H$ is a Kneser graph, then $\Aut(H)$ is the set of permutations of $V(H)$ induced by permuting the underlying ground set.  Using this, one can verify that when $H$ is a Kneser graph, the family $\cH$ from Lemma~\ref{lem:transitive} is the set of graphs known as the Johnson scheme, which is one of the most famous examples of an association scheme.  As such, one can view the family $\cH$ from Lemma~\ref{lem:transitive} as an analog of association schemes which exist for all edge-transitive graphs. For more on association schemes we refer the reader to the books \cite{bannai2021algebraic,godsil2016erdos}.  However, we emphasize that our approach, which was originally motivated by association schemes, requires no knowledge of association schemes to understand the details of our arguments.
		
		For the rest of this section, we fix $H$ to be a vertex and edge-transitive graph with $\cH=\{H_1,\ldots,H_n\}$ the family guaranteed by Lemma~\ref{lem:transitive}, and without loss of generality we assume $H_n=H$.  We let $A_i$ denote the adjacency matrix of $H_i$, and for notational convenience we let $A_0$ denote the identity matrix of dimension $|V(H)|$.  Note that Lemma~\ref{lem:transitive}(b) implies $\sum_{i=0}^n A_i=J$.
		
		It turns out that to prove Lemma~\ref{lem:main} for the matrix $X$, it suffices to prove an analogous inequality for the matrix $Z$ defined by $Z_{u,v}:=||X_{[u,v]}||^2$ for $u,v\in V(H)$.  Note that $Z$ is non-negative, and it will turn out to be PSD whenever $X$ is.  The following lemma establishes this analogous inequality for $Z$ in the special case that $Z$ is in the span of the $A_i$ matrices.
		
		\begin{lemma}\label{lem:ZJohnson}
			Let $z_0,z_1,\ldots,z_n$ be real numbers such that $Z:=\sum_{k=0}^n z_i A_i$ is a non-negative PSD matrix.  Then 
			\[\sument(Z)  \leq  \left(1+\frac{\lam_{\max}(A_n)}{|\lam_{\min}(A_n)|}\right)\sument((J-A_n) \circ Z).\]
		\end{lemma}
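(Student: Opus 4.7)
The plan is to exploit two structural facts about the family $\cH$. First, by Lemma~\ref{lem:transitive}(b), combined with the convention that $A_0=I$ is supported on the diagonal, the matrices $A_0,\ldots,A_n$ have pairwise disjoint supports, so $A_i\circ A_j=\delta_{ij}A_i$, and consequently
\[Z\circ A_i = z_i A_i \qquad\text{and}\qquad (J-A_n)\circ Z = \sum_{i=0}^{n-1}z_iA_i.\]
Second, vertex-transitivity of $\Aut(H)$ together with Lemma~\ref{lem:transitive}(c) forces each $H_i$ to be regular; writing $d_i$ for its degree (with $d_0:=1$), we have $\sument(A_i)=|V(H)|d_i$, and $\lam_{\max}(A_n)=d_n$ since $H=H_n$ is itself $d_n$-regular. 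Substituting these in reduces the desired inequality to the purely algebraic claim
\[\mu\, z_n \le \sum_{i=0}^{n-1}z_id_i,\qquad \mu:=|\lam_{\min}(A_n)|,\]
for all scalars $z_0,\ldots,z_n$ such that $Z=\sum_{i=0}^n z_iA_i$ is both entrywise non-negative and PSD.

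To prove this reduced claim, I would first note that the entrywise non-negativity of $Z$, combined with the disjoint supports, forces each $z_i\ge 0$. Then I would test the PSD condition on a unit eigenvector $v$ corresponding to the eigenvalue $-\mu$ of $A_n$:
\[0\le v^TZv = -\mu z_n + \sum_{i=0}^{n-1} z_i\cdot v^TA_iv.\]
Finally, since each $H_i$ is $d_i$-regular, the adjacency matrix $A_i$ has spectral radius $d_i$, so $v^TA_iv\le d_i$. Combined with $z_i\ge 0$, this yields $\mu z_n\le\sum_{i<n}z_id_i$, as required.

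The critical step is the reduction to a linear inequality together with the choice of test vector: the factor $\lam_{\max}(A_n)/|\lam_{\min}(A_n)|$ appears precisely because we test against a bottom eigenvector of $A_n$, while the complementary upper bounds $v^TA_iv\le d_i$ for the other $A_i$ follow from Perron--Frobenius. Once this reduction is in place, I do not anticipate any serious obstacle; all remaining ingredients are elementary spectral facts about regular graphs.
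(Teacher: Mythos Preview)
Your proposal is correct and follows essentially the same route as the paper's proof: reduce via disjoint supports and regularity of the $H_i$ to the scalar inequality $|\lam_{\min}(A_n)|\,z_n \le \sum_{i<n} z_i\,\lam_{\max}(A_i)$, then verify it by testing the PSD condition on a bottom eigenvector of $A_n$ and bounding each $v^TA_iv$ by $\lam_{\max}(A_i)=d_i$ using $z_i\ge 0$. The only difference is cosmetic (you do the algebraic reduction first, the paper last); note that the regularity of each $H_i$ is most cleanly deduced from part~(d) of Lemma~\ref{lem:transitive} together with vertex-transitivity of $H$, rather than part~(c).
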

		
		\begin{proof}
			Because the graphs $H_i$ are edge disjoint, $Z=\sum_{k=0}^n z_i A_i$ will be non-negative if and only if $z_i\ge 0$ for all $i$.  By using Raleigh quotients, we see $\vec{x}^T A_i \vec{x}\le \lam_{\max}(A_i)$ for any real vector $\vec{x}$ and $0\le i\le n$.  In particular, if $\vec{x}$ is the eigenvector of $A_n$ associated to $\lam_{\min}(A_n)$, then $Z$ being PSD implies
			\[0\le \vec{x}^T Z\vec{x}=\sum_{i=0}^{n}z_i\cdot \vec{x}^T A_i\vec{x} \le \sum_{i=0}^{n-1} z_i \lam_{\max}(A_i)+z_n\lam_{\min}(A_n),\]
			where this last step implicitly used $z_i\ge 0$ for all $i$.  Rearranging gives
			\begin{equation}\sum_{i=0}^{n-1} z_i \lam_{\max}(A_i)\ge -z_n\lam_{\min}(A_n)=z_n|\lam_{\min}(A_n)|,\label{eq:rearrange}\end{equation}
			where the last step used that the smallest eigenvalue of a real symmetric matrix with 0 diagonals is non-negative.
			
			Recall that we assumed $H=H_n$ is vertex-transitive, so by Lemma~\ref{lem:transitive}(d) each of the $H_i$ graphs are vertex-transitive as well.  In particular, each $H_i$ is a graph on $|V(H)|$ vertices which is regular, and hence is regular of degree $\lam_{\max}(A_i)$.  This implies \[\sument(A_i)=\lam_{\max}(A_i) |V(H)|\] for $i\ge 1$, and this expression trivially holds for $i=0$ as well. Using this and \eqref{eq:rearrange} gives
			\begin{align*}\left(1+\frac{\lam_{\max}(A_n)}{|\lam_{\min}(A_n)|}\right)\cdot \sument((J-A_n)\circ Z)&=\left(1+\frac{\lam_{\max}(A_n)}{|\lam_{\min}(A_n)|}\right)\cdot \sum_{i=0}^{n-1} z_i  \sument(A_i)\\ 
				&= \sum_{i=0}^{n-1}z_i \lam_{\max}(A_i)|V(H)|+\frac{\lam_{\max}(A_n)}{|\lam_{\min}(A_n)|}\cdot \sum_{i=0}^{n-1}z_i \lam_{\max}(A_i)|V(H)|\\
				&\ge \sum_{i=0}^{n-1}z_i \lam_{\max}(A_i)|V(H)|+\frac{\lam_{\max}(A_n)}{|\lam_{\min}(A_n)|}\cdot z_n|\lam_{\min}(A_n)| |V(H)|\\ &=\sum_{i=0}^n z_i \lam_{\max}(A_n)|V(H)|=\sument(Z), \end{align*}
			proving the result.
		\end{proof}
		We can bootstrap Lemma~\ref{lem:ZJohnson} to prove an analogous result for matrices that are not necessarily in the span of the $A_i$ matrices.
		\begin{lemma}\label{lem:Z} Let $Z$ be a non-negative PSD matrix indexed by $V(H)$.  Then 
			\[\sument{(Z)}  \leq  \left(1+\frac{\lam_{\max}(A_n)}{|\lam_{\min}(A_n)|}\right)\sument{((J-A_n) \circ Z)}.\]
		\end{lemma}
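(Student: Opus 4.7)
The strategy is to reduce to Lemma~\ref{lem:ZJohnson} by averaging $Z$ over the action of $\Aut(H)$. For $\pi\in\Aut(H)$, let $P_\pi$ denote the corresponding permutation matrix on $V(H)$, and set
\[\bar Z := \frac{1}{|\Aut(H)|}\sum_{\pi\in \Aut(H)} P_\pi Z P_\pi^T.\]
Each summand is non-negative PSD (conjugation by a permutation matrix preserves both properties), hence so is $\bar Z$.

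The central step is to show that $\bar Z$ lies in the span of $A_0,A_1,\ldots,A_n$. By construction $\bar Z_{u,v}=\bar Z_{\pi(u),\pi(v)}$ for all $\pi\in\Aut(H)$ and all $u,v$, so the entries of $\bar Z$ are constant on each $\Aut(H)$-orbit of pairs. Since $H$ is vertex-transitive, the diagonal forms a single orbit (contributing a constant multiple of $A_0=I$), and by Lemma~\ref{lem:transitive}(b) the off-diagonal orbits correspond exactly to the edge sets of $H_1,\ldots,H_n$. Therefore $\bar Z=\sum_{i=0}^n z_i A_i$ for some scalars $z_i$, which must be non-negative since $\bar Z$ is.

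It then remains to observe that the two quantities appearing in the inequality are unchanged by the symmetrization. Trivially $\sument(P_\pi Z P_\pi^T)=\sument(Z)$ for every $\pi$. Since $\pi$ is an automorphism of $H=H_n$, we have $P_\pi A_n P_\pi^T=A_n$ and hence $P_\pi(J-A_n)P_\pi^T=J-A_n$. Using the identity $(P_\pi X P_\pi^T)\circ(P_\pi Y P_\pi^T)=P_\pi(X\circ Y)P_\pi^T$ for permutation matrices, it follows that $\sument((J-A_n)\circ P_\pi Z P_\pi^T)=\sument((J-A_n)\circ Z)$. Averaging yields $\sument(\bar Z)=\sument(Z)$ and $\sument((J-A_n)\circ\bar Z)=\sument((J-A_n)\circ Z)$, so applying Lemma~\ref{lem:ZJohnson} to $\bar Z$ gives exactly the desired inequality for $Z$.

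The main obstacle is identifying the averaged matrix $\bar Z$ as an element of $\mathrm{span}(A_0,\ldots,A_n)$; this is where the edge-transitive (and vertex-transitive) structure of $H$ is essential. This decomposition is precisely what Lemma~\ref{lem:transitive} was designed to provide: its graphs $H_i$ are built from the pair-orbits of $\Aut(H)$, so any $\Aut(H)$-invariant symmetric matrix automatically expresses itself in the basis $\{A_0,\ldots,A_n\}$, supplying the hypothesis needed by Lemma~\ref{lem:ZJohnson}.
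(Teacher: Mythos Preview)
Your proposal is correct and follows essentially the same approach as the paper: symmetrize $Z$ over $\Aut(H)$, observe that the averaged matrix lies in the span of $A_0,\ldots,A_n$ (using the orbit description from Lemma~\ref{lem:transitive}), check that both $\sument(\cdot)$ and $\sument((J-A_n)\circ\cdot)$ are invariant under this averaging, and then invoke Lemma~\ref{lem:ZJohnson}. The only cosmetic differences are the convention $P_\pi Z P_\pi^T$ versus $P_\pi^T Z P_\pi$ (immaterial since $\Aut(H)$ is a group) and your use of the Schur-product identity for permutation conjugation where the paper uses the trace formulation; also note that the claim ``off-diagonal orbits correspond to the $E(H_i)$'' really draws on parts (c) and (d) of Lemma~\ref{lem:transitive} rather than (b) alone, though you make this explicit in your final paragraph.
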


		\proof For each permutation $\pi\in \Aut(H)$ of $V(H)$, let $P_\pi$ denote its corresponding permutation matrix indexed by $V(H)$.  Define
		\[
		\comp{Z} = \frac{1}{|\Aut(H)|} \sum_{\pi \in \Aut(H)} P_\pi^TZP_\pi.
		\]
		\begin{claim}
			We have $\sument(\comp{Z})=\sument(Z)$.
		\end{claim}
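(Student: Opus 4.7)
The plan is to observe that the operation $Z \mapsto P_\pi^T Z P_\pi$ simply permutes the rows and columns of $Z$, and any such permutation preserves the sum of all entries. More precisely, I would use the identity $\sument(M) = \ones^T M \ones$ for any matrix $M$ indexed by $V(H)$, where $\ones$ is the all-ones vector.

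The key fact is that every permutation matrix $P_\pi$ satisfies $P_\pi \ones = \ones$ (and dually $\ones^T P_\pi^T = \ones^T$), since each row and column of a permutation matrix contains exactly one $1$. Therefore, for any single $\pi \in \Aut(H)$,
\[
\sument(P_\pi^T Z P_\pi) = \ones^T P_\pi^T Z P_\pi \ones = (P_\pi \ones)^T Z (P_\pi \ones) = \ones^T Z \ones = \sument(Z).
\]
Averaging this identity over all $\pi \in \Aut(H)$ and using linearity of $\sument$ gives
\[
\sument(\comp{Z}) = \frac{1}{|\Aut(H)|}\sum_{\pi \in \Aut(H)} \sument(P_\pi^T Z P_\pi) = \frac{1}{|\Aut(H)|}\sum_{\pi \in \Aut(H)} \sument(Z) = \sument(Z),
\]
as desired. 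There is no genuine obstacle here; the only thing to verify is the elementary property that permutation matrices fix the all-ones vector, after which the claim is immediate from linearity of the averaging.
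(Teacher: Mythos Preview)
Your proof is correct and follows essentially the same approach as the paper: both arguments observe that $\sument(P_\pi^T Z P_\pi)=\sument(Z)$ for each permutation matrix $P_\pi$ and then average over $\Aut(H)$. The only difference is cosmetic---you justify the permutation-invariance via the identity $\sument(M)=\ones^T M\ones$ and $P_\pi\ones=\ones$, whereas the paper simply remarks that permuting the terms of a sum does not change its value.
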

		\begin{proof}
			Observe that $\sument{(P^T Z  P)} =  \sument{(Z)}$ for any permutation matrix $P$ since permuting the terms in the sum does not change the result.  With this we see
			\begin{equation*}
				\sument(\comp{Z})=\frac{1}{|\Aut(H)|} \sum_{\pi \in \Aut(H)} \sument(P_\pi^TZP_\pi)=\sument(Z).
			\end{equation*}
		\end{proof}
		\begin{claim}
			We have $\sument((J-A_n)\circ \comp{Z})=\sument((J-A_n)\circ Z)$.
		\end{claim}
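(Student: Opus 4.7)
The plan is to recognize that $\sument((J-A_n)\circ W)=\langle J-A_n,W\rangle$ (using the notation-section identity $\sument(X\circ Y)=\tr(XY^T)=\langle X,Y\rangle$), and then to exploit the fact that $J-A_n$ is invariant under the conjugation $M\mapsto P_\pi^T M P_\pi$ for every $\pi\in\Aut(H)$. This is exactly the feature that makes $J-A_n$ a valid ``test matrix'' after symmetrization by $\Aut(H)$; the previous claim was the instance $M=J$ of the same principle.

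First, by linearity of the inner product,
\[\sument((J-A_n)\circ \comp{Z})=\langle J-A_n,\comp{Z}\rangle=\frac{1}{|\Aut(H)|}\sum_{\pi\in\Aut(H)}\langle J-A_n,P_\pi^T ZP_\pi\rangle.\]
Next, for any symmetric matrix $M$, any matrix $N$, and any permutation matrix $P$, cyclicity of the trace gives
\[\langle M,P^T N P\rangle=\tr(MP^TN^TP)=\tr(PMP^TN^T)=\langle PMP^T,N\rangle.\]

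I would then observe the key invariance: since $\pi\in\Aut(H)=\Aut(H_n)$ preserves the adjacency of $H$, we have $P_\pi A_n P_\pi^T=A_n$, and trivially $P_\pi J P_\pi^T=J$, whence $P_\pi(J-A_n)P_\pi^T=J-A_n$. Applying the identity above with $M=J-A_n$ and $P=P_\pi$ therefore yields
\[\langle J-A_n,P_\pi^T ZP_\pi\rangle=\langle P_\pi(J-A_n)P_\pi^T,Z\rangle=\langle J-A_n,Z\rangle\]
for every $\pi\in\Aut(H)$. Averaging over $\Aut(H)$ leaves $\langle J-A_n,Z\rangle=\sument((J-A_n)\circ Z)$, proving the claim.

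I do not expect a genuine obstacle here: the entire argument is the $M=J-A_n$ analog of the preceding claim (which handles $M=J$), and the only structural input beyond trace manipulations is the defining property $\pi\in\Aut(H_n)\iff P_\pi A_nP_\pi^T=A_n$, which is immediate.
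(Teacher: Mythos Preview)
Your proof is correct and follows essentially the same route as the paper: both move the conjugation $P_\pi^T(\cdot)P_\pi$ from $Z$ to $J-A_n$ via trace cyclicity, use the invariance $P_\pi(J-A_n)P_\pi^T=J-A_n$, and then average over $\Aut(H)$. The only cosmetic difference is that the paper justifies this invariance by invoking Lemma~\ref{lem:transitive}(d) to get $P_\pi^T A_iP_\pi=A_i$ for all $i$ (and hence for $J=\sum_i A_i$), whereas you argue more directly that $J$ and $A_n$ individually are $\Aut(H)$-invariant; your shortcut is valid since here one only needs invariance of $A_n$ and $J$, not of the remaining $A_i$.
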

		\begin{proof}
			We first observe that for all $i\ge 0$ and $\pi\in \Aut(H)$, we have $P_\pi^T A_iP_\pi=A_i$.  This trivially holds for the identity matrix $A_0$, and for $i\ge 1$ this statement is equivalent to saying that each $\pi \in \Aut(H)$ is an automorphism of each $H_i$ which follows from Lemma~\ref{lem:transitive}(d).  In particular, since $\Aut(H)$ is closed under inverses and $J=\sum_{i=0}^n A_i$, this observation implies
			\[P_{\pi^{-1}}^T(J-A_n)P_{\pi^{-1}}=J-A_n\]
			for any $\pi\in \Aut(H)$.
			
			Now, for any $\pi\in \Aut(H)$, we have
			\begin{align}
				\sument{((J-A_n)\circ P_\pi^T Z P_\pi)} &= \tr((J-A_n)(P_\pi^T Z P_\pi)) \nonumber\\
				&= \tr(P_\pi (J-A_n)P_\pi^TZ) \nonumber\\
				&= \tr((J-A_n)Z) \nonumber\\
				&= \sument{((J-A_n)\circ Z)} \nonumber,
			\end{align}
			where the third equality used the observation made above and $P_\pi^T=P_{\pi^{-1}}$.   Summing this equality over all $\pi\in \Aut(H)$ and dividing by $|\Aut(H)|$ gives the desired result.
		\end{proof}
		\begin{claim}
			There exist real numbers $z_i$ such that $\comp{Z}=\sum_{i=0}^n z_i A_i$.
		\end{claim}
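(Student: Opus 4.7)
The plan is to exploit the $\Aut(H)$-invariance of $\comp{Z}$ that comes baked into its definition, and then to use Lemma~\ref{lem:transitive}(c) together with the vertex-transitivity of $H$ to conclude that the entries of $\comp{Z}$ are constant on the $\Aut(H)$-orbits of (ordered) pairs of vertices.

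First I would show that $P_\sigma^T \comp{Z} P_\sigma = \comp{Z}$ for every $\sigma \in \Aut(H)$. This is a routine averaging computation: since $\pi \mapsto \pi \sigma$ is a bijection of $\Aut(H)$ and $P_{\pi\sigma} = P_\pi P_\sigma$, we have
\[
P_\sigma^T \comp{Z} P_\sigma \;=\; \frac{1}{|\Aut(H)|}\sum_{\pi \in \Aut(H)} P_\sigma^T P_\pi^T Z P_\pi P_\sigma \;=\; \frac{1}{|\Aut(H)|}\sum_{\pi' \in \Aut(H)} P_{\pi'}^T Z P_{\pi'} \;=\; \comp{Z}.
\]
Reading this entrywise, $\comp{Z}_{u,v} = \comp{Z}_{\sigma(u),\sigma(v)}$ for every $u,v \in V(H)$ and every $\sigma \in \Aut(H)$.

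Next I would use this invariance to identify the entries of $\comp{Z}$. Since $Z$ is symmetric (being PSD), so is each $P_\pi^T Z P_\pi$, and hence $\comp{Z}$ is symmetric. For any two distinct vertices $u,v$ there is by Lemma~\ref{lem:transitive}(b) a unique index $i \ge 1$ with $uv \in E(H_i)$, and by Lemma~\ref{lem:transitive}(c) any other edge $xy \in E(H_i)$ satisfies $\pi(uv) = xy$ for some $\pi \in \Aut(H)$. Combined with symmetry of $\comp{Z}$, this forces $\comp{Z}_{u,v} = \comp{Z}_{x,y}$, so the value of $\comp{Z}_{u,v}$ depends only on $i$; call it $z_i$. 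For the diagonal, since $H = H_n$ is vertex-transitive and every $\pi \in \Aut(H)$ is an automorphism of $H$, the action of $\Aut(H)$ is transitive on $V(H)$, and so $\comp{Z}_{u,u}$ takes a common value $z_0$ for all $u \in V(H)$.

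Finally, with $z_0,z_1,\ldots,z_n$ as above, the matrices on both sides of
\[
\comp{Z} \;=\; z_0 A_0 + \sum_{i=1}^{n} z_i A_i
\]
agree entry-by-entry: on the diagonal both equal $z_0$, and off the diagonal both equal $z_i$ where $i$ is the unique index with $uv \in E(H_i)$. This gives the claim. No step here is genuinely hard; the only subtlety worth flagging is the matching of the two-sided conjugation action with the unordered-pair orbits supplied by Lemma~\ref{lem:transitive}, which is handled cleanly once one observes that $\comp{Z}$ is symmetric.
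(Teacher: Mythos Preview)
Your proof is correct and follows essentially the same route as the paper: both establish $\Aut(H)$-invariance of $\comp{Z}$ via the group-reindexing trick and then invoke Lemma~\ref{lem:transitive}(c) together with vertex-transitivity to see that entries are constant on orbits. The only cosmetic difference is that you isolate the invariance $P_\sigma^T\comp{Z}P_\sigma=\comp{Z}$ as a preliminary step (and explicitly note the symmetry of $\comp{Z}$ to pass from unordered to ordered pairs), whereas the paper folds this computation directly into the entrywise comparison.
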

		\begin{proof}
			Let $u,v$ and $x,y$ be pairs of distinct vertices such that $uv,xy\in E(H_i)$ for some $i\ge 1$. By Lemma~\ref{lem:transitive}(c), there exists some $\sigma\in \Aut(H)$ with $\sigma(uv)=xy$, and by definition we have
			\[\comp{Z}_{x,y}=(P_\sigma^T\comp{Z} P_\sigma)_{u,v}=\left(\frac{1}{|\Aut(H)|}\sum_{\pi\in \Aut(H)} P_{\pi\circ \sigma}^T Z P_{\pi\circ \sigma}\right)_{u,v}= \left(\frac{1}{|\Aut(H)|}\sum_{\pi\in \Aut(H)} P_{\pi}^T Z P_{\pi}\right)_{u,v}=\comp{Z}_{u,v},\]
			where the third equality used that $\Aut(H)$ is a group so that summing over $\pi$ or $\pi\circ \sigma^{-1}$ gives the same quantity.
			
			The result above implies that for all $i\ge 1$ there exists some real number $z_i$ such that $Z_{u,v}=z_i$ whenever $(A_i)_{u,v}=1$. An analogous argument using that $H$ is vertex-transitive implies that there exists a $z_0$ with $Z_{u,u}=z_0$ for all $u$.  These two facts give the claim.
		\end{proof}
		Note that $\comp{Z}$, which is the sum of non-negative PSD matrices, is also non-negative and PSD, so the previous claim shows that Lemma~\ref{lem:ZJohnson} applies to $\comp{Z}$.  Using this and the other two claims gives
		\[\sument(Z)=\sument(\comp{Z})\le \left(1+\frac{\lam_{\max}(A_n)}{|\lam_{\min}(A_n)|}\right)\sument{((J-A_n) \circ \comp{Z})}= \left(1+\frac{\lam_{\max}(A_n)}{|\lam_{\min}(A_n)|}\right)\sument{((J-A_n) \circ Z)},\]
		proving the result.
		
		\qed

		We now prove Lemma~\ref{lem:main}, which we restate below.
		
		\begin{lemn}[\ref*{lem:main}]
			Let $G$ be a graph with an $H$-partition $\bigsqcup_{u\in V(H)} V_u$, and let $X$ be a PSD matrix with rows and columns indexed by $V(G)$.  If $H$ is vertex and edge-transitive, then
			\[\norm{X}^2\le \left(1+\frac{\lam_{\max}(H)}{|\lam_{\min}(H)|}\right)\sum_{(u,v):u,v\in V(H),\{u,v\}\notin E(H)} \norm{X_{[u,v]}}^2.\]
		\end{lemn}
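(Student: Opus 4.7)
The plan is to reduce Lemma~\ref{lem:main} to Lemma~\ref{lem:Z} by encoding the submatrix norms of $X$ into a single matrix indexed by $V(H)$. Specifically, I would define the $V(H)\times V(H)$ matrix $Z$ by
\[
Z_{u,v} := \norm{X_{[u,v]}}^2.
\]
Two bookkeeping identities are immediate from the fact that the blocks $X_{[u,v]}$ partition the entries of $X$: first, $\sument(Z) = \sum_{u,v\in V(H)} \norm{X_{[u,v]}}^2 = \norm{X}^2$; and second, since $(J-A_n)_{u,v}=1$ precisely when $\{u,v\}\notin E(H)$ (which includes the diagonal case $u=v$), we have $\sument((J-A_n)\circ Z) = \sum_{(u,v):\{u,v\}\notin E(H)} \norm{X_{[u,v]}}^2$. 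So once Lemma~\ref{lem:Z} is applied to $Z$, the statement of Lemma~\ref{lem:main} falls out.

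The remaining task, and the key technical step, is to verify that $Z$ is itself non-negative and PSD so that Lemma~\ref{lem:Z} is applicable. Non-negativity is obvious. For positive semidefiniteness, I would use the PSD assumption on $X$ to write $X = L L^T$ for some real matrix $L$ whose rows are indexed by $V(G)$, and let $L_u$ denote the submatrix of $L$ consisting of the rows indexed by $V_u$. Then $X_{[u,v]} = L_u L_v^T$, and using the cyclic property of the trace,
\[
Z_{u,v} = \norm{L_u L_v^T}^2 = \tr(L_u L_v^T L_v L_u^T) = \tr\bigl((L_u^T L_u)(L_v^T L_v)\bigr) = \langle L_u^T L_u,\, L_v^T L_v\rangle.
\]
Thus $Z$ is the Gram matrix of the vectors $\{L_u^T L_u\}_{u\in V(H)}$ sitting inside the real inner product space of symmetric matrices equipped with the Frobenius inner product, and hence $Z$ is PSD.

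With $Z$ shown to be non-negative and PSD, Lemma~\ref{lem:Z} gives
\[
\norm{X}^2 = \sument(Z) \le \left(1+\frac{\lam_{\max}(A_n)}{|\lam_{\min}(A_n)|}\right) \sument((J-A_n)\circ Z) = \left(1+\frac{\lam_{\max}(H)}{|\lam_{\min}(H)|}\right)\sum_{(u,v):\{u,v\}\notin E(H)} \norm{X_{[u,v]}}^2,
\]
which is exactly the desired inequality. The only step requiring any real thought is the PSD verification of $Z$; once that Gram-matrix interpretation is in hand, the rest is a bookkeeping identification and a direct invocation of Lemma~\ref{lem:Z}.
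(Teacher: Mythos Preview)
Your proof is correct and follows essentially the same route as the paper: define $Z_{u,v}=\norm{X_{[u,v]}}^2$, verify $Z$ is non-negative and PSD, and invoke Lemma~\ref{lem:Z}. The only difference is in the PSD verification---the paper uses the Schur product theorem (noting $X\circ X$ is PSD and lifting a test vector from $V(H)$ to $V(G)$), whereas your Gram-matrix argument via a factorization $X=LL^T$ is an equally clean alternative.
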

		
		\begin{proof}
			Let $X$ be as in the statement of the lemma.  Let $Z$ be the matrix with rows and columns indexed by $V(H)$ such that
			\[Z_{u,v}=\norm{X_{[u,v]}}^2=\sument((X\circ X)_{[u,v]}).\]
			\begin{claim}
				The matrix $Z$ is non-negative and PSD.
			\end{claim}
			\begin{proof}
				Non-negativity is clear.  To show that it is PSD, consider any vector $\vec{z}$ indexed by $V(H)$.  Define a vector $\vec{x}$ indexed by $V(G)$ with $\vec{x}_w=\vec{z}_u$ whenever $w\in V_u$.  Because $X$ is PSD, $X\circ X$ is also PSD by the Schur product theorem.  Thus
				\[0\le \vec{x}^T(X\circ X) \vec{x}=\sum_{(u,v):u,v\in V(H)} \vec{z}_u \vec{z}_v \sument((X\circ X)_{[u,v]})=\vec{z}^TZ\vec{z}.\]
				As $\vec{z}$ was arbitrary, we conclude that $Z$ is PSD.
			\end{proof}
			
			From the definition of $Z$ we have
			
			\[\|X\|^2=\sum_{(u,v):u,v\in V(H)} \|X_{[u,v]}\|^2=\sum_{(u,v):u,v\in V(H)} Z_{u,v}=\sument(Z).\]
			With this it suffices to prove
			\begin{align}
				\sument{Z}=  &\leq \left(1+\frac{\lam_{\max}(H)}{|\lam_{\min}(H)|}\right)\sum_{(u,v):u,v\in V(H),\{u,v\}\notin E(H)}  Z_{u,v} \nonumber\\&=\left(1+\frac{\lam_{\max}(H)}{|\lam_{\min}(H)|}\right) \sument{((J-A_n) \circ Z)} \nonumber\end{align}
			The inequality above follows immediately from our claim and Lemma~\ref{lem:Z}, proving the result.
		\end{proof}

		\section{Further directions and open problems}\label{sec:conclusion}
		
		As noted in the introduction, it was asked by Anekstein, Elphick, and Wocjan \cite{wocjan2018more} whether the lower bound of Ando and Lin~\cite{AndLin2015} in Theorem~\ref{thm:AL} holds for the vector chromatic number $\chi_c(G)$.  In this paper we have taken the first step towards this question by showing that it holds for the fractional chromatic number $\chi_f(G)$.  While it is unlikely that our methods can bring us all the way down to proving bounds for $\chi_c(G)$, it is plausible that one can strengthen our Theorems~\ref{thm:main} and \ref{thm:mainGen} by considering other variants of the chromatic number defined in terms of homomorphisms.
		
		To formalize this idea, we let $\Phi(G)$ denote the set of graphs $H$ such that there exists a homomorphism from $G$ to $H$.  Given a set of graphs $\cH$, we define the \textit{$\cH$-chromatic number } of a graph $G$ by \[\chi_\cH(G):=1+\inf_{H\in \cH\cap \Phi(G)}\frac{\lam_{\max}(H)}{|\lam_{\min}(H)|}.\]
		For example, if $\cH$ consists of the set of cliques then $\chi_\cH(G)$ is exactly the chromatic number, and if $\cH$ is the set of Kneser graphs then $\chi_\cH(G)$ is the fractional chromatic number.  Similarly Theorem~\ref{thm:mainGen} can be viewed as a lower bound on $\chi_{\cH}(G)$ when $\cH$ is the set of edge-transitive graphs.  It would be of interest to study other bounds and properties of $\chi_{\cH}(G)$ for other families of graphs $\cH$.  In particular, we ask the following.
		\begin{question}\label{quest}
			What is the largest family of graphs $\cH$ such that for all graphs $G$,
			\[\chi_\cH(G)\ge 1+\max\left\{\frac{s^+(G)}{s^-(G)},\frac{s^-(G)}{s^+(G)}\right\}.\]
		\end{question}
		For example, Theorem~\ref{thm:mainGen} shows this holds when $\cH$ consists of all $H$ which are edge-transitive.  This result trivially extends to all $H$ for which there exists an edge-transitive graph $H'\in \Phi(H)$ such that $\lambda_{\max}(H)/|\lambda_{\min}(H)|\ge \lambda_{\max}(H')/|\lambda_{\min}(H')|$, so for example, this applies to all $H$ which are bipartite.  As discussed in Subsection~\ref{sec:comparison}, by considering $H=\overline{C_6}$, we know that Question~\ref{quest} can not hold for any $\cH$ containing all vertex-transitive graphs.
		
		There are other spectral problems where our approach may be fruitful.  In particular, Elphick and Wocjan \cite{ElpWoc2017} proved the following inertial bound for $\chi(G)$:
		\[\chi(G)\ge 1+\max\left\{\frac{n^+}{n^-},\frac{n^-}{n^+}\right\},\]
		where $n^+, n^-$ denote the numbers positive and negative eigenvalues of $G$, respectively. They conjectured that this same lower bound also holds for $\chi_f(G)$, and it is plausible that some of the ideas used here could be used to tackle this conjecture.  We note however that the stronger result \[\chi_{\cH}(G)\ge 1+\max\left\{\frac{n^+}{n^-},\frac{n^-}{n^+}\right\}\] does not hold when $\cH$ consists of all edge-transitive graphs (due to $G=C_5$ mapping to $C_5\in \cH$).  Thus to prove this conjectured lower bound for $\chi_f(G)$ using our approach, one would likely need to use stronger properties of the Johnson scheme than those guaranteed by Lemma~\ref{lem:transitive}.

		\section*{Acknowledgements}
		
		The authors gratefully acknowledge the American Institute of Mathematics; this collaboration started at \textit{Spectral Graph and Hypergraph Theory: Connections \& Applications}, December 6--10, 2021, a workshop at the American Institute of Mathematics. 
		The authors also are grateful for discussion on positive and negative squared energies with Aida Abiad, Leonardo de Lima, Dheer Noal Desai, Leslie Hogben and Jos\'{e} Madrid. 
		

	\end{document}